\documentclass[10pt,leqno]{amsart}

\usepackage{amsmath,amsopn,amssymb,amsthm,multicol}
\usepackage{amscd}
\usepackage{graphicx}
\usepackage{xcolor}
\usepackage{bm}

\numberwithin{equation}{section}
%\author{}
%\date{}
  \textwidth 160mm
 \textheight 225mm %220mm
 \setlength{\topmargin}{-0.5cm}
\setlength{\oddsidemargin}{0cm}
\setlength{\evensidemargin}{0cm}
%%%%%%%%%%%%%%%%%%%%%%%%%%%%%%%%%%%%%%%%%%
%\usepackage{array}
%\makeatletter
%\newcommand{\thickhline}{%
 %   \noalign {\ifnum 0=`}\fi \hrule height 1pt
 %   \futurelet \reserved@a \@xhline}
%\newcolumntype{"}{@{\hskip\tabcolsep\vrule width 1pt\hskip\tabcolsep}}
%\makeatother
%\def\thefootnote{\empty }

 \newtheorem{lemma} {Lemma} [section]

\newtheorem{prop} [lemma]{Proposition}

%%%%%%%%%%%%%%%%%%%%%%%%%%%%%%%%%%%%%%%%%%%%%%%%%%%

%\begin{document}
\title{Tunamis  on a deep open sea and on a gentle sloping beach %\\ \quad
 \\ 
 -- a mathematical theory -- }
 
\author{Tadayoshi Kano} 
\address{Institut Vercors, Ushinomiyatyo 13-4, Sakyoku-Yoshida, 6068302 KYOTO, Japan}
\email{institut\_vercors@cpost.plala.or.jp}

\dedicatory{Dedicated to Jean LERAY,  ancient patron of the author for his French government scholarship.}
 
 \keywords{tunamis, Airy's shallow water waves, Friedrichs expansion, cruising speed of surface waves}

\begin{document}

\begin{abstract}
Approaching a sloping beach, shallow water surface waves of Airy get suddenly  $ +\infty$ or $ -\infty$ propagation speed at the point of surface  $x = x_0$, say, where  the 
   tangent $\varGamma_x$ of  the surface  $y = \varGamma$ ``coincide'' with that  $b_x$  of the water-bottom $y = b(x)$,  losing the cruising sound speed of propagation so high on a deep open  sea.  That is,
 the tunamis gain instantaneously a  $ +\infty$  propagation speed just before  the crest as  
$(\varGamma_x - b_x)(x) \to +0$, $x \to x_0\!-\!0$ , and a $ -\infty$ propagation speed just after the trough as $(\varGamma_x - b_x)(x) \to -0$, $x \to x_0\!-\!0$.  We would have thus a big crush between 
the crest rushing forward and the trough rushing backward.  This is a mathematical  structure of tunamis ``on'' a sloping beach, in particular.
  \end{abstract}

  \maketitle

\section{Introduction}
\markboth{Kano}{Tunamis}
\def\thefootnote{\empty }
In this note, I'll give a simple mathematical theory for tunamis\footnote{ Partially supported by ESI.    An application of the author's invited talk in ESI  
conference:``Qualitative and numerical  aspects of water waves and other interface problems",  May 17, 18, 19 / 2011.
}. 
The tunamis should be shallow water waves, gentle on a deep open sea and very 
violent on a shallow water, on a sloping beach, for example. 

 We start from the fact that we have water waves, especially shallow water waves 
of Airy, already since seventies of last century, either as analytic functions solutions, 
or differentiable functions solutions (in Sobolev spaces). 

  My analysis therefore consists to clarify the cause of these sudden rushing 
of tunamis waves, as stated in the abstract, arriving on the shallow water near the coasts,
and, saying by the way, not the analysis of wave breaking itself on the shallow water. 

  Indeed, I know a specialist of tunamis study who cannot find the force who push inland
suddenly the tunamis waves  arriving on the shore who cease to move on the instant 
before start rushing. 

\medskip
 
\subsection{} What are tunamis in fact?  Tunamis are water waves produced typically by
the subsidence or the swell of the sea bottom caused by the submarine earthquakes.  
They are of very great wavelength and especially of the water moved from the 
bottom to the surface of the sea.  In the case of Hukusima ``tunamis", the swell of 
the sea bed is about 40 kilometers in the latitude and  60 kilometers in  longitude 
with 7$\sim$10 meters vertical elevation.  And the peak of seawater elevation caused 
by this swell is 7 meters with skirt of 400 kilometers, north/south, and 250 kilometers, 
east/west with the depth about 3900 meters.  Thus for these waves, the 
ratio of the water depth to the wavelength should be small and the ratio of the 
amplitude of waves to the water depth being finite could be, eventually, rather 
large.  We refer readers the following three papers and seabed map by Japan Coast 
Guard:

 (a) Toshiya Fujiwara et al.: The 2011 Tohoku-oki Earthquake: Displacement reaching 
      the Trench axis, Science, vol. 334 (2012.12.2), 1240--1241.
      
 (b) Yasuhiro Yoshida et al.: Source process of the 2011 off the Pacific coast of Tohoku
       Earthquake with the combination of teleseismic and strong motion data,  
       Earth Planets Space, 63 (2011), 565--569.
       
 (c) Tomohiro Takagawa: Inversion Methods for Tsunamis Source Estimation
      (in japanese),  ``Nagare", Journal of the Japanese Society of Hydrodynamics, 
      31(2013), 9--14.  (The author:  Asia-Pacific Center for Coastal Disaster Research, 
      Port and Airport Research Institute, Nagase 3-1-1, Yokosuka, Kanagawa, Japan). 
 %%%
\medskip
      
      \subsection{} In the first place, I show that shallow water waves of Airy propagate with the 
``sound speed" on the deep open sea. And next, we show mathematically {\em the effect
of the diminution of the depth of water} transforming these shallow water waves of
Airy to a violent rushing tunamis ``on" the beaches losing their cruising sound speed 
on a deep open sea. For this purpose, we start with precise and exact definition of 
{\em shallow water surface waves}. 

\medskip

\section{Water surface waves on a shallow water}
\subsection{Equations of the problem and Friedrichs expansion}

Water waves in two dimensional  flow are governed by  system
of partial differential equations of Euler for the potential of flow $\varPhi(t, x, y)$  
and the wave profile $\varGamma =\varGamma(t,  x)$:
    \begin{eqnarray*}  & &  \varPhi_{x x}+ \varPhi_{y y}=0, 
     (x, y) \in \Omega_t=\{ (x, y) \in {\mathbb R}^2, \ b(x) < y < \varGamma(t, x), \ t > 0 \} \\ 
     & & 
  - b_x  \varPhi_x + \varPhi_y =0, \ x \in {\mathbb R}^1, \ y= b(x)  \\  %%%%20250407
     & &  \varPhi_t + \frac{1}{2}({\varPhi_x}^2 + {\varPhi_y}^2 )  +y =0, \ 
    x \in {\mathbb R}^1, y= \varGamma(t, x)\\
 & &  \varGamma_t  + \varGamma_x \varPhi_x - \varPhi_y =0, \  x \in {\mathbb R}^1, \ y= \varGamma(t, x)
   \end{eqnarray*} 
   with initial data 
    \begin{eqnarray*}  & &     \varPhi(0, x, y) = \varPhi_0(x, y)    \\
 & &   \varGamma(0, x) = \varGamma_0(x)  > 0. 
   \end{eqnarray*}   

The general existence theorems for water surface waves were not known up to seventies of the last century. We knew only first linear approximation by Lagrange for infinitesimal oscillations 
as equations of vibrations of chords when G.B. Airy of Greenwich Observatory gave his first nonlinear approximation {\it of finite amplitude} [G.B. Airy: Tides and wave, B. Fellowes, 
London, 1845, 241-396] 
\begin{eqnarray*} &&
u_t + u u_x + \varGamma_x =0,  \quad u = \varPhi_x \\
&& 
\varGamma_t + ((\varGamma - b(x))u )_x = 0, 
 \end{eqnarray*}
which is known, now, as \emph{equations of shallow water waves of Airy}.

\medskip

\textbf{Remark}  We discuss especially the effect of the diminution of
the depth of water transforming these shallow water waves of Airy to a violent 
rushing tunamis ``on'' the beaches, we analyse so the water surface waves 
profiles on the plane perpendicular to the shoreline. Thus we work with above 
two dimensional flow. 

\medskip 

 In 1948, K.-O. Friedrichs has given a systematic method to drive this Airy's 
system by introduction of some non-dimensional parametre in the Euler equations
above and supposing a sort of asymptotic expansion for water waves  $\varPhi=\varPhi(t, x, y)$  and 
$\varGamma = \varGamma(t, x)$: CPAM vol.1 of Courant Institute of NY University. He recovered 
Airy's system for the first terms,  $\varPhi^0(t, x, y)$  and the wave profile $\varGamma^0(t, x)$,  of his 
``asymptotic'' expansion 
\begin{eqnarray*} &&
 \varPhi^0_t  + \frac{1}{2}{(\varPhi^0_x)}^2 + \varGamma^0 = 0, \\
&& 
\varGamma^0_t + {((\varGamma^0 - b(x))\varPhi^0_x )}_x = 0. 
 \end{eqnarray*}
 He himself did not give the mathematical justification for his ``asymptotic'' expansion.
 In 1979$\sim$83, we have shown a mathematical justification for this Friedrichs
expansion for water surface waves for analytic solutions \cite{Kan.-Nis.1}, \cite{Kan.-Nis. 2}. We started
by the recovery of Friedrichs' formation in 1948, formulas from (A.8) to (A.17),
in our dimensionless problem for water surface in the following.  
%%%

\medskip

 \subsection{ Dimensionless problem.}  
 
 Defining the dimensionless variables by
 \begin{eqnarray*} &&
 (t, x, y) = \left(\frac{\lambda}{c}t',  \lambda x', h y' \right), \quad  (\varGamma, \varPhi) = (h \varGamma', \  c \lambda \varPhi'), \quad \delta= \frac{h}{\lambda}    
   \end{eqnarray*}
  where  $\lambda$  is the {\it wave length},  $h$ the {\it depth of water} and $c$ is {\it the sound speed,} the square root of $gh$, $g$ being the {\it gravity},  equations for water waves become as follows dropping prime sign: % with nondimensional parametre   $ \delta= \frac{h}{\lambda} $:
  
    \begin{eqnarray}  & &    \delta^2 \varPhi_{x x}+ \varPhi_{y y}=0,  (x, y) \in \Omega_t=\{ (x, y) \in {\mathbb R}^2, \ b(x) < y < \varGamma(t, x), t > 0 \} \\  
     & &   - \delta^2 b_x \varPhi_x + \varPhi_y =0,  x \in {\mathbb R}^1, y= b(x) \\
   & &  \delta^2(\varPhi_t + \frac{1}{2}{\varPhi_x}^2  +y) + \frac{1}{2}{\varPhi_y}^2 =0,  x \in {\mathbb R}^1, y= \varGamma(t, x) \\
 & & \delta^2(\varGamma_t  + \varGamma_x \varPhi_x) - \varPhi_y =0,  
  x \in {\mathbb R}^1, y= \varGamma(t, x)
   \end{eqnarray} 
    for the velocity potential $\varPhi$ and wave profile function  $\varGamma$ with initial data

 \begin{eqnarray}  & &  \varPhi(0, x, y) = \varPhi_0(x, y)  \hspace{236pt}\quad  \\
 & &   \varGamma(0, x) = \varGamma_0(x)  > 0. \hspace{236pt}\quad 
   \end{eqnarray}   
   
   We have the existence theorem of analytic solution locally in time for analytic
initial data : \cite{Kan.-Nis.1},  \cite{Kan}. Our theorem in [2] is given explicitly for the flat bottom with 
$b_x = 0$,  but our result is easily extended to our present problem with the variable depth.

  In fact, the existence theorem of analytic solution to this bi-dimensional water surface equations 
  is to find a conformal mapping of the domain occupied by water with analytic water bottom and 
  an analytic free surface (surface waves). We have accomplished it in a scale of Banach spaces 
  of analytic functions  \cite{Kan.-Nis. 2}. 
%\end{center}

  To solve surface waves equations (2.3)--(2.4),  $\varPhi_y$   on the surface  $y= \varGamma(t, x)$ 
should be represented by tangential derivative  $ \varPhi_x$   on the surface   $y= \varGamma(t, x)$.   
That is, we should calculate the so-called Dirichlet-Neumann map on the water surface.
 
  We applied, in fact, the Plemelj's formulas for the boundary values of analytic functions applying 
also the fundamental studies by L.C.Woods:

\noindent J. Plemelj: Problems in the sense of Klein and Riemann, Chapter 14, Interscience, 1964, 

\noindent  \underline{\phantom{J. Plemelj}}: Ein Erg\"anzungsatz zur Cauchyschen Integraldarstellung analytischer Funktionen,  Randwerte 
\phantom{J. Plemelj} \ betreffend, 18, XIX. Jahrgang 1908, 205--210,

\noindent  L.C. Woods: The theory of subsonic plane flow, Cambridge University Press, 1961.

\medskip

 We had the D-N map with explicit kernel function formula, and thus we had not only analytic solutions
 but also we see at once that they are indefinitely many times differentiable with respect to this 
 dimensionless parameter  $\delta= {h}/{\lambda} $    and in the sequel we had our Friedrichs
 expansion as follows in the next subsection. For the differentiable functions solutions, the problem 
 is not so  simple.

  \bigskip

 \subsection{ Friedrichs expansion for water surface wave and their equations.}  
 As K.-O. Friedrichs \cite{Frie} has proposed an asymptotic expansion for solutions of the 
 above dimensionless water waves Cauchy problem in 1948, and showed that the 
 first terms of this expansion satisfy Airy's famous shallow water waves equations, 
 we have effectively: 
 
 \begin{eqnarray} &&\varPhi^\delta(t, x, y) = \sum_{n=0}^\infty  \delta^{2n} \varPhi_n, 
\quad 
\varGamma^\delta(t, x, y) = \sum_{n=0}^\infty  \delta^{2n} \varGamma_n
  \end{eqnarray}
 and
 \begin{eqnarray} && \varPhi^0_t  + \frac{1}{2}{(\varPhi^0_x)}^2 + \varGamma^0 = 0, 
 \\ && 
 \varGamma^0_t + {((\varGamma^0 - b(x))\varPhi^0_x )}_x = 0. 
 \end{eqnarray}
  
  The coefficients of higher order terms satisfy, in fact, the following strictly hyperbolic systems with
  inhomogeneous terms for n=1,2,3,...:
  
   \begin{eqnarray} &&
   \varPhi^n _t  + { \varPhi^0_x \varPhi^n_x} + \varGamma^n =f{}_{n-1}, \\
&& 
 \varGamma^n_t + {(\varGamma^0 \varPhi^n_x+ \varGamma^n\varPhi^0_x)}_x = g{}_{n-1}. 
 \end{eqnarray}

  $f{}_{n-1}$  and  $g{}_{n-1}$  being differential polynomials with respect to $\{\varPhi{}_m,\varGamma{}_m\}_{m=0,1, 2,...,n-1}$,
  respectively.
  
  \medskip
  
   This Friedrichs expansion is justified mathematically to be Taylor expansion with respect to the dimensionless parameter $\delta$ for analytic solutions [3].  That is,  analytic 
solutions are infinitely many times differentiable with respect to $\delta$.

   For smooth functions solutions in Sobolev spaces,  so far, we have no such mathematical justification for  the Friedrichs expansion. But, let us hope to have it!
    
    \medskip

    \medskip
  
    {\bf Finite speed propagation.} Solutions for strictly hyperbolic system  (2.10)--(2.11) 
above, have finite dependence domain, that is they are of finite speed  
propagation. Thus these higher order terms of Friedrichs expansion for water waves 
(on shallow water) don't affect the propagation speed of tunamis.   This is the first condition for 
a justification of our Tunamis Equations  (3.5)--(3.6).
  
    \medskip
    
   {\bf Energy inequalities for (2.10)--(2.11).}
    It is well known that the strictly hyperbolic system  (2.10)--(2.11)  has an energy inequality for 
solutions with respect to  $L^2$  norm. The solutions are therefore completely controlled by initial 
data and the inhomogeneous terms.  They are finally completely controlled by solutions for 
(2.8)--(2.9).   This is the second condition for a justification for our system   (3.8)--(3.9)   as 
Tunamis Equations. 

    \medskip

     In addition to this, if we take into account of the fact that tunamis have very long 
wave-length, even near the beach, at least in a very early time, $\delta$  is very small and 
more for   $\delta^2 =\varepsilon$ . We have thus a good error estimate for the shallow water waves 
approximation for tunamis. 

    \medskip

\subsection{Shallow water  waves and Shallow water  waves of Airy}  
We precise now the definitions for these two which were sometimes in confusions.

\medskip

 {\bf Definition 2.1  Surface waves of shallow water}

\medskip

   For  $0 < \delta \ll 1$, the system  (2.1)--(2.4)  are surface waves of shallow water and their
solutions are shallow water surface waves. 

\medskip

{\bf Nota Bene}   Shallow water surface waves are not another than tidal waves of G.B. Airy,
 [H. Lamb: Hydrodynamics, $6^e$ \'ed., Cambridge, 1932, {\S}173].

\medskip

We know well that Airy himself has discovered the first nonlinear approximation
for shallow water surface waves (but for $ b_x = 0$) in the following form:[H. Lamb: ibid., {\S}187] :
  \begin{eqnarray} && u_t  + u u_x + \varGamma_x = 0, 
 \\ && 
 \varGamma_t + {((\varGamma - b(x))u )}_x = 0. 
 \end{eqnarray}
 
\medskip  

We repeat here that this approximation by Airy is now mathematically justified as 
the first terms of the Friedrichs expansion: Kano-Nishida for analytic solutions  \cite{Kan.-Nis.1}, \cite{Kan.-Nis. 2} 
and see  also \cite{Lan},  the book of D. Lannes, for differentiable functions solutions. 
However,  for the latter case  we have no  mathematical justifications for Friedrichs expansion itself. 
  We see in fact that  $u= \varPhi^0_x $  and  $\varGamma=\varGamma^0$ satisfy (2.12)--(2.13).
They would be, however, only a little part of shallow water surface waves. We define then  
``shallow water surface waves of Airy"  with these in mind as: %by above remark

\medskip

 {\bf Definition 2.2  Shallow water surface waves of Airy.}

\medskip
 
The system  (2.12)--(2.13)
above is {\it equations for shallow water surface waves of Airy} and their solutions
are {\it shallow water surface waves of Airy}. 

\medskip

  Then we can state the above discovery of Airy as follows: {\it shallow water surface waves} are  
approximated by {\it shallow water surface waves of Airy} with the error of order $\delta^2$ in 
suitable topology . 

\medskip

We now have  the following on the deep open sea:  
     
\begin{prop}  Shallow water surface waves of Airy propagate on a deep open 
sea with the cruising sound speed almost everywhere. 
\end{prop}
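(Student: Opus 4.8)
The plan is to read the propagation speed of shallow water surface waves of Airy directly off the characteristic structure of the quasilinear system (2.12)--(2.13), and then to check that on a deep open sea this speed coincides with the cruising sound speed $c=\sqrt{gh}$ up to a correction governed by the (small) wave amplitude and velocity.

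First I would cast (2.12)--(2.13) in quasilinear form. With $w=(u,\varGamma)^{\mathsf{T}}$ and the total water depth $H=H(t,x):=\varGamma(t,x)-b(x)$, the Airy system reads
\[
  w_t+\begin{pmatrix} u & 1 \\ H & u \end{pmatrix}w_x=\begin{pmatrix} 0 \\ u\,b_x \end{pmatrix}.
\]
The eigenvalues of the coefficient matrix are $\lambda_\pm=u\pm\sqrt{H}=u\pm\sqrt{\varGamma-b}$; these are the characteristic speeds of the system, hence the propagation speeds of the Airy waves, and they are real and distinct wherever $\varGamma>b$ — the strict hyperbolicity already invoked in the paper. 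Two features deserve emphasis: the bottom slope $b_x$ enters only the source term, not the principal part, so the propagation speed depends solely on $u$ and the local depth $\varGamma-b$; and in the dimensionless variables introduced above the sound speed $c=\sqrt{gh}$ is normalised to $1$ (time was scaled by $\lambda/c$ and length by $\lambda$), with the undisturbed depth $\varGamma-b\equiv1$.

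Next I would use the hypothesis ``deep open sea''. There the water is deep and the depth is, to the relevant accuracy, constant, $\varGamma-b=1+\eta$ with $\eta$ small, and the wave is gentle, $u$ small — smallness that is not merely imposed on the data but propagated on the interval of existence by the standard $H^{s}$ energy estimate for the quasilinear hyperbolic system (2.12)--(2.13) that underlies its local well-posedness, of which the $L^{2}$ inequality quoted above for the strictly hyperbolic system (2.10)--(2.11) is the linearised counterpart. Hence
\[
  \lambda_\pm=u\pm\sqrt{1+\eta}=\pm1+\bigl(u\pm(\sqrt{1+\eta}-1)\bigr)=\pm1+O\!\bigl(|u|+|\eta|\bigr),
\]
i.e., in physical units, $\lambda_\pm$ equals the cruising sound speed $c=\sqrt{gh}$ up to corrections that are small with the wave amplitude and velocity — small everywhere outside the neighbourhoods of pronounced crests and troughs, neighbourhoods that shrink as the wave flattens. (The same holds for the speed $(Hu)_x/H_x$ of a constant-depth contour away from the extrema of $\varGamma$; its degeneracy where $\varGamma_x-b_x$ vanishes is the phenomenon that becomes violent on a sloping beach.) This ``almost everywhere'' contrasts sharply with the beach itself, where $\varGamma-b\to0$ and the sound speed $\sqrt{\varGamma-b}$ is lost altogether.

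The step I expect to be the real obstacle is turning ``almost everywhere'' into something quantitative rather than heuristic: one must show that the set on which the amplitude correction to $\pm c$ is not negligible has small measure. Here I would exploit precisely the feature that distinguishes a tunami on the open sea — its energy is spread over an enormous wavelength $\lambda$ (hundreds of kilometres against a depth of a few kilometres), so $\delta=h/\lambda\ll1$, and $u$, $\eta$, $\eta_x$ are pointwise small off a set whose measure is controlled by the steepness; off that set $\lambda_\pm$ differs from $\pm c$ by $o(1)$. If one wants the same conclusion for the genuine shallow water surface waves (Definition 2.1) rather than only for the Airy model, one invokes the two remarks made above — finite speed of propagation for (2.10)--(2.11), and the $L^{2}$ energy inequality controlling the higher Friedrichs coefficients in terms of the zeroth one — to conclude that the higher-order terms do not alter this speed.
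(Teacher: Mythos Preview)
Your eigenvalue computation is correct and is the standard way to read off propagation speeds: the characteristic speeds of (2.12)--(2.13) are indeed $u\pm\sqrt{\varGamma-b}$, and you correctly observe that $b_x$ sits only in the source term, so it does not enter the principal part. But the paper proves the proposition by a genuinely different route, and the difference governs what ``almost everywhere'' means in the statement.

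The paper first passes to the Riemann-type variables $P=u+2\gamma$, $Q=u-2\gamma$ with $\gamma=\sqrt{\varGamma-b}$, obtaining $P_t+(\gamma+u)P_x=-b_x$, and then \emph{absorbs} the inhomogeneity into the transport coefficient to write the ``tunamis equation''
\[
P_t+\Bigl(\gamma+u+\frac{b_x}{P_x}\Bigr)P_x=0.
\]
The quantity $\gamma+u+b_x/P_x$ --- the speed of level sets of $P$, not the characteristic speed you compute --- is what the paper calls the propagation speed. The proof then shows that on the deep open sea (taken as $\sqrt{\varGamma-b}\gg1$, not $\approx1$ as in your normalisation) this equals $\sqrt{\varGamma-b}$ times a bounded factor \emph{provided} $P_x\neq0$; the exceptional set behind ``almost everywhere'' is precisely $\{P_x=0\}$, analysed case by case in parts [II] and [III]. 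Your sentence ``the bottom slope $b_x$ enters only the source term \ldots\ so the propagation speed depends solely on $u$ and the local depth'' is exactly what the paper's reformulation is designed to overturn: for the paper, the interesting speed is the one in which $b_x/P_x$ appears, because that term is the mechanism for the violent acceleration on the beach. Your approach is cleaner and more orthodox, and it proves that the \emph{characteristic} speed is the sound speed everywhere (no ``almost'' needed); what it does not deliver is the paper's level-set speed and the identification of $\{P_x=0\}$ as the seed of the tunami singularity, which is the point of the whole paper. Your parenthetical about the degeneracy of $(Hu)_x/H_x$ gestures at the right phenomenon but tracks the wrong quantity: the paper watches $P_x=u_x+(\varGamma_x-b_x)/\sqrt{\varGamma-b}$, not $H_x$.
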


  We give the proof for this proposition in the next paragraph and here we 
show the following proposition as an application:

\begin{prop}   For analytic solutions of the system {\em (2.1)--(2.4)}, shallow water 
surface waves propagate with the cruising sound speed on a deep open sea.
\end{prop}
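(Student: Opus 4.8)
The plan is to reduce Proposition 2.4 to Proposition 2.3 by way of the Friedrichs expansion. First I would invoke the analytic existence theorem of \cite{Kan.-Nis.1}, \cite{Kan.-Nis. 2}, \cite{Kan}: for analytic initial data $(\varPhi_0,\varGamma_0)$ the solution $(\varPhi^\delta,\varGamma^\delta)$ of the dimensionless system (2.1)--(2.4) exists locally in time, is analytic, and is infinitely many times differentiable in $\delta$, its Taylor expansion in $\delta$ being exactly the Friedrichs expansion (2.7). In particular the leading pair $(\varPhi^0,\varGamma^0)$ solves Airy's system (2.8)--(2.9); equivalently $u=\varPhi^0_x$, $\varGamma=\varGamma^0$ solve (2.12)--(2.13), so $(\varPhi^0_x,\varGamma^0)$ is a shallow water surface wave of Airy in the sense of Definition 2.2.

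Second, I would apply Proposition 2.3 to this leading term: on a deep open sea $(\varPhi^0_x,\varGamma^0)$ propagates with the cruising sound speed almost everywhere. Thus the principal part of the analytic solution already carries the asserted propagation speed.

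Third, I would show that the remainder $\varPhi^\delta-\varPhi^0=\sum_{n\ge1}\delta^{2n}\varPhi_n$ (and likewise for $\varGamma$) cannot alter this. Each higher coefficient $(\varPhi_n,\varGamma_n)$, $n\ge1$, satisfies one of the strictly hyperbolic systems (2.10)--(2.11), whose inhomogeneous terms $f_{n-1},g_{n-1}$ are differential polynomials in the lower-order coefficients. Two facts, both recorded above, then do the work: (i) a strictly hyperbolic system has a finite dependence domain, so these higher-order terms are of \emph{finite} propagation speed and cannot create or override the sound-speed propagation carried by the Airy part; and (ii) the $L^2$ energy inequality for (2.10)--(2.11) bounds each $(\varPhi_n,\varGamma_n)$ by its initial data together with $f_{n-1},g_{n-1}$, hence, by induction on $n$, by the Airy solution $(\varPhi^0,\varGamma^0)$ alone. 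Since on a deep open sea the tunami wavelength is very large, $\delta\ll1$ and a fortiori $\delta^2=\varepsilon\ll1$, so this slaved remainder contributes only an error of order $\delta^2$ in the relevant topology.

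Putting the three steps together: the analytic solution is, for small $\delta$, the $\delta$-convergent sum of a leading term that propagates at the cruising sound speed almost everywhere (Proposition 2.3) plus a finite-speed, energy-controlled remainder of size $O(\delta^2)$; hence the analytic shallow water surface waves themselves propagate with the cruising sound speed on a deep open sea. The step I expect to be the real obstacle is not the convergence of the series — that is the easy, purely quantitative part — but making precise the very phrase ``propagates with the cruising sound speed'' for the full nonlinear analytic solution and checking that it is stable under the $O(\delta^2)$ hyperbolic correction: one must argue that the characteristic speed read off the complete solution coincides, almost everywhere, with that of the Airy leading term, and it is here that the finite-speed property and the energy estimate of (2.10)--(2.11) have to be combined with care.
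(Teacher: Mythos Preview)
Your proposal is correct and follows essentially the same route as the paper: decompose the analytic solution via the Friedrichs expansion, apply Proposition~2.1 to the Airy leading term, and observe that the higher-order coefficients satisfy strictly hyperbolic systems and hence propagate with finite speed. The paper's own proof is a two-sentence version of exactly this argument (plus a footnote on extending the Dirichlet--Neumann map to variable bottom); your added remarks on the energy inequality and on the delicacy of the phrase ``propagates with the cruising sound speed'' are fair elaborations, but they do not change the underlying strategy.
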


{\bf Nota Bene}  Everyone knows this sound speed propagation of water surface waves
which has not been, however, proved so far mathematically.
 
 \def\thefootnote{$*$}
\begin{proof} 
  The approximation by shallow water surface waves of Airy propagate by 
the cruising sound speed by the above Proposition 2.1, and the rest of this 
approximation propagate by finite speed satisfying strictly hyperbolic PDE  
shown by the Friedrichs expansion \cite{Kan.-Nis. 2}\footnote{In \cite{Kan.-Nis. 2}, Friedrichs expansion with $b_x=0$; here, if we  replace D-N map 
$\varGamma(t, x; \delta) = y(t, \xi_1; \delta) = \frac{A_\delta}{\delta} x(t, \xi_1; \delta)$ by 
$\varGamma(t, x; \delta) = y(t, \xi_1; \delta) = \frac{A_\delta}{\delta} x(t, \xi_1; \delta)+ D_\delta \beta(\xi_1)$, with $\xi_1=$ {\em inverse function of} $x=x(t, \xi_1; \delta)$,  $\beta(\xi)=$ {\em conformal image of water bottom} $y= b(x)$, the proof is also valid for the variable depth with 
 $D_\delta \beta(\xi_1) = \frac{1}{2 \delta}\int^{\infty}_{-\infty} \mathrm{sech} \frac{\pi}{2 \delta}(\tau -\xi) \beta (\tau)\, d \tau$. }.
\end{proof} 

  On a shallow water, however, {\it shallow water surface waves of Airy}  could not propagate always with
cruising sound speed but inevitably produce singularities. To see this, we derive TUNAMIS EQUATIONS 
(3.5)--(3.6)  from  (2.12)--(2.13)  and analyse them in the next §3. And finally in §4, we shall show mechanism
how these {\it shallow water surface waves of Airy} develop singularities as germs of violent inland and offshore tunamis near the coasts. 
   
{\bf Remark}
This Friedrichs expansion is proved in the Euler coordinates system. We remark here, by the way, that it is also proved in the Lagrangian coordinates system in T. Kano - S. Miki: {\em Sur les ondes superficielles de l'eau et le d\'eveloppement de Friedrichs dans le syst\`eme de coordonn\'ees de Lagrange},   Acte du Colloque International sur  La Th\'eorie des Equations aux D\'eriv\'ees Partielles et la Physique Math\'ematique, Paris (juin 2000), pp. 26--30. 

\medskip

Already mentioned above, however, differentiable functions solutions of water 
surface equations, we have no, not yet, mathematical justification for this Friedrichs 
expansion, and thus we have no general proof for sound speed propagation of water 
surface waves on a deep open sea.

\medskip

{\bf Nota Bene} It would be also interesting  to see shallow water waves from the point 
of view of wave structure in the connection of three essential parameters: 
{\em wave length, water depth and the amplitude of waves}.  In fact, water waves 
of the length $\lambda$ and of the amplitude  $a$  on the water of the depth $h$ satisfy-
ing the condition $\delta^2 \ll \varepsilon$ were called {\em shallow water waves}, 
where  $\delta$ and $\varepsilon$  is the ratio of the water depth to the wave length:  
$ \delta = h/\lambda$ and the ratio of the amplitude to the depth: $\varepsilon = a/h$, respectively \cite{Kan.-Nis. 2}, \cite{Kan}.   

For Hukusima tunamis of 11.3.2011,  $\delta^2 \sim (39/8000)^2$  or $\sim (39/5000)^2$  and  
$\varepsilon \sim 7/3900$, see ref.(c)  in the {\bf Introduction}.

\medskip

\section{Tunamis equations.  Proof of the Proposition 2.1.}  
\subsection{Tunamis equations.} 

We give first {\bf TUNAMIS EQUATIONS}. We rewrite   (2.12)--(2.13)   as follows:  
\begin{eqnarray} &&
u_t + u u_x + (\varGamma - b(x))_x  =  - b(x)_x,  \quad u = \varPhi_x \\
&& 
(\varGamma- b(x))_t + ((\varGamma - b(x))u )_x = 0.
 \end{eqnarray}
   
 Let us now define $\gamma$ by $\gamma^2 $=$ \varGamma-b(x) > 0$,
  $\gamma=\sqrt{\varGamma-b(x)} > 0$, we have then  
   \begin{eqnarray} 
   && P_t  + (\gamma+u)P_x = -b_x , 
 \\  
&& Q_t - (\gamma - u )Q_x= -b_x
\end{eqnarray}
for $P=u+2\gamma$  and  $Q=u-2\gamma$. 
 
 \medskip
 
  From these, we have finally the TUNAMIS EQUATIONS: 
  
 \medskip
  
 {\bf Definition 3.1  Tunamis equations.} The following system of partial differential equations are tunamis equations: 
  \begin{eqnarray} 
   &&  P_t  + \left(\gamma+u+\frac{b_x}{P_x}\right)P_x = 0 \\
  && Q_t - \left(\gamma-u-\frac{b_x}{Q_x} \right)Q_x =  0.
 \end{eqnarray}

 Let us discuss a little bit on this definition of tunamis equations: water surface wave  $P$, inland 
tunamis, propagate toward "beaches" with the speed $\displaystyle \gamma+u+\frac{b_x}{P_x} $,
modifying their cruising velocity $ \gamma+u=\sqrt{\varGamma-b(x)}+u$  by  $\displaystyle \frac{b_x}{P_x}$
referring the state of the sea-bed  $b_x$ in the connection with the structure $P_x$ of $P$ himself.
The same for the coastal tunamis $Q$. It is just from this structure with  $\displaystyle \frac{b_x}{P_x}$ 
and $\displaystyle \frac{b_x}{Q_x} $of this ``tunamis equations" start a violent tunamis development on ``a beach"  
as we see it in the {\S}4.
 
  Even if $b_x$ vanishes, there is certainly a wave collapse on a shore by the non-linearity, as Airy 
  insisted, but never a violent tunamis.  His theory of the shallow water waves is in  §187  of  H. Lamb, 
  as indicated already above, but for  $b_x = 0$. 
  
  \medskip
  
  \subsection{Tunamis equations and Airy's shallow water waves.} 
  We have no problems of existence theorems for  (3.1)--(3.2)  and  (3.3)--(3.4), but  tunamis equations 
  presented such as it is, it would not be assured that all tunamis solutions to arise again as Airy's 
  shallow water waves. If we concern, however, ourselves with tunamis satisfying $\displaystyle u=\frac{1}{2}(P+Q)$  and  $\displaystyle\gamma=\frac{1}{4}(P-Q)$, equations (3.3)--(3.4) for them give us 
 
\begin{eqnarray*} 
    &&  u_t + u u_x + {(\gamma^2 + b)}_x =0, \\
    && (\gamma^2)_t + \gamma^2 u _x + 2 u \gamma \gamma_x =0 
\end{eqnarray*}
and we recover Airy's shallow water waves equations with $\gamma^2 = \varGamma - b$. 

 We are thus going to analyse the behaviors of tunamis  $\{P, \, Q\}$ satisfying   
$\displaystyle  u=\frac{1}{2}\{P+ Q\} $ and $\displaystyle\gamma=\frac{1}{4}(P-Q)$, that is, tunamis  $P = u + 2 \gamma$ and $Q =  u - 2 \gamma$. 

\subsection{} Now we discuss the behavior of  $P$  on a deep open sea: $\gamma = \sqrt{\varGamma - b(x)} \gg 1$,  giving firstly a proof for the {\bf Proposition 2.1}:

 \medskip
 
\noindent {\bf Proof of the Proposition 2.1}. 

\smallskip

\noindent [I] For the sea-bed such that $b_x \neq 0$,  as long as we have $P_x \neq 0$, that is  as long 
as we have  $\displaystyle P_x = u_x + \frac{\varGamma_x - b_x}{\sqrt{\varGamma  - b }}\neq 0 $, there would be no visible affection on the 
cruising speed of ``tunamis" (shallow water waves) by conditions of sea-bed form. 

   In fact,  tunamis $P$  propagate by the speed
   $$\gamma +u+ \frac{b_x}{P_x} = \sqrt{\varGamma  -b} + u + 
   \frac{ b_x \sqrt{\varGamma  -b}}{u_x \sqrt{\varGamma  -b} + (\varGamma _x - b_x)}
   $$
   which is {\bf  a velocity of order of $\sqrt{\varGamma  -b} $    multiplied by a certain finite quantity}.
   
      That is, the tunamis  $P$   propagate by the sound speed, the gravity $g$  being
nondimensionalised as unity. 

 \medskip
 
\noindent [II]  On the other hand, if the tunamis  $P$: 
 $$P = u + 2 \gamma  = u +2 \sqrt{\varGamma  -b}
 $$                                   
satisfy  $ P_x(X) = 0$  on  $ x = X$  on the deep open sea, $\sqrt{\varGamma  -b} \gg 1$, where   $b_x (X) $  
doesn't vanish,  what would happen?  As the tunamis satisfy 
$$P_t + \left(\gamma +u + \frac{b_x}{P_x}\right)P_x = 0,$$
it is certain that the tunamis  $ P(t)$ become singular by this component  $\displaystyle \frac{b_x}{P_x} \to +\infty$  or $\to -\infty$ .  Now, we see that the velocity depends also on the sea 
current   $u(t, x)$, then we must see what happens there along the flow from $x= X - \epsilon$ to
$X$. To see this, we calculate $P_x(X) - P_x(X - \epsilon)$, $\epsilon > 0$:
 
 \medskip
 
$\displaystyle P_x(X)-P_x(X-\epsilon)=u_x(X)-u_x(X-\epsilon) +
\frac{(\varGamma_x - b_x)(X)}{\sqrt{(\varGamma  - b)(X)}} -
\frac{(\varGamma_x - b_x)(X-\epsilon)}{ \sqrt{(\varGamma - b)(X-\epsilon)} } 
=\epsilon u_{xx}(X-\delta\epsilon)+ +\epsilon\frac{(\varGamma_x - b_x)_x(X-\delta^{\prime}\epsilon)}{\sqrt{(\varGamma  - b)(X-\delta^{\prime}\epsilon)} }
-\frac{\epsilon}{2} \left[\frac{1}{\sqrt{(\varGamma  - b)(X-\delta^{\prime}\epsilon)}} \left( \frac{(\varGamma_x - b_x)(X-\delta^{\prime}\epsilon)}{\sqrt{(\varGamma  - b)(X-\delta^{\prime}\epsilon)}} \right)^2\right]$, 
for $0<\delta, \delta^\prime<1$. 

\medskip 

 In consequence, we see the following two possibilites: 
 
(A) \ For $0 < \epsilon \ll 1$, $P_x(X - \epsilon) < P_x(X) =0$ and $P_x(X - \epsilon)  \to -0$, as $\epsilon \to 0$, and thus the tunamis  $P$  get at $x=X-0$  an  instantaneous 
$-\infty$   speed rushing, in consequence, to the outer open sea. 

 (B) \ For $0 < \epsilon \ll 1$, $P_x(X - \epsilon) > P_x(X) =0$ and $P_x(X - \epsilon)  \to +0$, as $\epsilon \to 0$. Thus the tunamis
get an instantaneous $+\infty$ speed at  at $x=X-0$ rushing (possibly) thus to inland as inland 
tunamis.

  We are going to see the conditions for them:
  
   \medskip
   
($1^\circ$) Firstly, we examine the role of sea current  $u = u(t, x)$  on the deep open sea:
 $\sqrt{\varGamma  - b } \gg 1$.
Many studies show that these sea currents are of the speed of 2 or 3 knots,  i.e.  only
several  kilometers per hour even in equators currents or Gulf current.  Whereas the
water waves speed, $\sim \sqrt{\varGamma - b }$, is about $738 km/h$ on the open Pacific Ocean the mean depth of which is $4282m$ (the maximum depth is $8020m$).
 
   \medskip
   
 ($2^\circ$) On the other hand, $u_x $ is more important as   $u_x < 0$  perform the crest and $ u_x  > 0$  the
trough.  The signature of $ u_{xx} $ would be more important for any  $u_x$   positive or negative
for these performance of crest or trough.
 
   \medskip
   
 ($3^\circ$) \  Now it seems that the sea current  $u$  is not so great and it seems that many sea currents  as North-pacific circular current or the Oyasio current as almost stationary. And thus it seems that  $u_x$ is not so large.  We see then by $\displaystyle P_x = u_x + \frac{\varGamma_x - b_x}{\sqrt{\varGamma  - b }} = 0 $  that  $\displaystyle  \frac{\varGamma_x - b_x}{\sqrt{\varGamma  - b }}$ 
should be finite on the  $x = X$   where  $P_x   = 0$   which correct himself that   $(\varGamma_x - b_x)(X)$ is very large on this point  $X $ on the deep open sea.  Then except some singular case as 
the submarine-cliff of Japan Trench, if $P_x(X) = 0$  occurs actually, then  we must have
 $|\varGamma_x(X)| \gg 1$ on the  deep open sea $\sqrt{\varGamma  - b } \gg 1$. 
 
   Now, in these circumstances, it would be difficult to see by the concrete manner what 
occurs  with $P_x(X - \epsilon) < P_x(X) = 0$ or $P_x(X - \epsilon) > P_x(X) = 0$ on that point  $X$  where  $P_x(X) = 0$.
 
   \medskip

 ($4^\circ$) \ The situation is totally different actually from $P_x(X) = 0$ on the coast:  $\sqrt{\varGamma  - b } \ll 1$.
There,   $\displaystyle  \{P_x(X) = u_x(X)+ \frac{\varGamma_x - b_x}{\sqrt{\varGamma  - b }}=0 ,   \sqrt{\varGamma  - b }\ll 1$  and   $u_x$   is finite$\}$  allow us to give 
precisely the point  $X$  on which $P_x(X) =0$ occurs as the point  $X$ where we have   
$-u_x(X)\sqrt{\varGamma  - b } = \varGamma_x - b_x \sim \pm 0 $, $ + 0$ {\em near the crest and }  $-0$ {\em   near the trough}.
That is, on a shallow water of the sea (near the coasts especially),  $P_x(X) =0$, if it occurs
actually  on  $X$, on this  $X$  {\em the tangent of tunamis surface is actually very close to that of
the sea-bed}.  Here we would be able to analyze the mechanism of the ``birth'' of tunamis:
the transformation of arriving shallow water waves on the coasts to actual rushing inland 
or outer tunamis by the effect of the diminishing depth on the coasts. It will be discussed 
actually in the next paragraph ${\S}$4.
 
   \medskip

 ($5^\circ$) \ Anyway these agitations on exceptional points on the surface of tunamis, 
however, would not affect so much these tunamis  $P$  on {\em a deep open sea}: 
wave-lengths of tunamis  are so long and the sea is so deep that the possible crush of 
tunamis waves near the crest rushing toward and the tunamis waves near the trough 
rushing backward would not happen in reality on a deep open sea.

  \medskip
  \newpage 
 
\noindent [III] \ {\bf What does it happen when   $P_x$  vanishes on the points  $\bm x^*$ where  $\bm{b_x (x^*) = 0}$ ?}
 
  \medskip
  
 Let $b_x(x*)$ vanish as we see $ b_x(x*) =B_1{(x - x^*)}^p_{}(1 + O(| x - x^* |))$, $B_1 \neq 0$, around   $x = x^*$.  And   let  $P_x (x^*)$    vanish so as to be: 
 
 $\displaystyle P_x (x^*) =   u_x(x^*) + \frac{(\varGamma_x - b _x)(x^*)}{\sqrt{(\varGamma  - b)(x^*) } } $
 vanishes as $u_x \sqrt{\varGamma  - b } + \varGamma_x = C_1{(x-x^*)}^q_{}(1+ O(|x-x^*|))$ around $x = x^*$. We have then, for $ x \sim x^*$ 
 $$\frac{b_x}{P_x} = \frac{B_1{(x - x^*)}^p_{}(1+ O(|x-x^*|))\sqrt{\varGamma  - b } }{(C_1{(x-x^*)}^q_{}- B_1{(x - x^*)}^p_{}) (1+ O(|x-x^*|))},
 $$
 here $\sqrt{\varGamma  - b(x) } \gg 1$. 
 
 And thus : 
 
(i) if  $q > p$, $\displaystyle \frac{b_x}{P_x} \sim Cte. \sqrt{\varGamma  - b }$  \quad  (ii) if $q < p$, we have    $\displaystyle \frac{b_x}{P_x} \sim  0$  for  $x \sim x^*$  
\quad (iii) if  $p = q$, we have 
 
 \begin{equation*}
 \frac{b_x}{P_x}= \frac{B_1(1+O(|x -x^*|)) \sqrt{\varGamma  - b }}{(C_1-B_1)(1+O(|x -x^*|)}
 \sim  
 \left\{ 
  \begin{aligned} 
    Cte. \sqrt{\varGamma  - b },  \ \mbox{\em if } \  C_1 \neq B_1  \\
  \pm \infty, \ \mbox{\em if } \  C_1= B_1
  \end{aligned}
  \right.   \ \ \mbox{for}  \ \ x \sim x^*.
\end{equation*}
  The situation would be almost the same as in the case  [II]. 
\begin{flushright}
q.e.d.
\end{flushright}
% \end{proof}

\medskip

\noindent
  We see in fact: 

(1) \  We have seen above that Airy's shallow water waves propagate almost everywhere 
(see above) on a deep open sea with the sound speed very large with the depth of the 
sea which is, in fact, with the gravity  $g$ normalized to unity
 \begin{eqnarray}  \sqrt{\varGamma  - b(x) }
 \end{eqnarray}
 as it is generally believed among specialists in tunamis study, while 
 \begin{eqnarray}  \left| \frac{\varGamma - H}{\sqrt{\varGamma  - b(x) } }\right|
 \end{eqnarray}
 is sensitively small,  $H$  being the mean depth of the sea at rest. 
 
 \medskip
 
(2) \  We cannot neglect, however, the effect of the variable depth on the propagation 
speed of tunamis even on the deep open sea on certain exceptional points as mentioned
in  [II]  above.  If it is not very visible, it would be because of the followings: 
the ``tunamis" waves are of very great length (in the case of 11.3.2011  tunamis, they 
are about $500$ kilometers  east/west  and $800$ kilometers  north/south) even if the
amplitude is not so great (the maximum height of source waves of tunamis of  
11.3.2011, Hukushima, is measured as about  $7$  meters by specialist: see Takagawa’s 
paper  (c)   cited on the top of this paper), and thus the mass of each wave of tunamis 
would be really enormous and the big inertia of it could cancel this possible rushing 
of ``tunamis" waves on the deep open sea. Tunamis are not, once more, Cheshire cat!  
Adding to this, the sound speed of ``tunamis" propagation is already very large on the 
deep open sea. 

\medskip

{\bf  Example}   For a better understanding of the phenomena stated in  (2)  above, I give 
an example of a figure of the water bottom and possible fluid (water) velocity as a particular case, supposing that  $u_x = 0$ and also $\varGamma_x - b_x = 0$  for $ x_1 < x < x_2$:

{\em  By the water bottom $y=b(x) = B_0 + B_1 x$, $ x_1 < x < x_2$, $B_j$ $j=1,  2$ being constants, the tunamis on the water flow with the velocity $u(t, x) = a(t) = A_0 - B_1t$, $u_x$ vanishing, would have an affection on the propagation speed by $\pm\infty$ for $B_1 >0$ and $B_1 < 0$, respectively, on this interval. }

We see in the first place that the system of tunamis equations  (3.5)--(3.6)  becomes 
\begin{eqnarray}  && \gamma_t^{} + (\gamma +a(t)) \gamma_x = -\frac{1}{2}(a^\prime(t) + b^\prime(x))   \\
&&
 \gamma_t^{} - (\gamma -a(t)) \gamma_x = \frac{1}{2}(a^\prime(t) + b^\prime(x)) 
 \end{eqnarray}
 and in the sequel  $\gamma$ must satisfy 
 \begin{eqnarray} && \gamma_t^{} + a(t) \gamma_x^{}  = 0.
 \end{eqnarray}
  On the other hand, by subtracting  (3.10)  from  (3.9),  we have 
\begin{eqnarray} && (\gamma^2)_x^{} = - (a^\prime(t) + b^\prime(x)), \mbox{i.e.} \ 
 \gamma^2 = c(t) - a^\prime(t) x -b(x), \ for \  some \  c(t).
  \end{eqnarray}
  We have in the sequel 
\begin{eqnarray} && \varGamma = \varGamma(t, x) = c(t) -a^\prime(t) x, \quad   x_1 < x < x_2.  
\end{eqnarray}
Now, the condition $\varGamma_x - b_x = 0$  gives 
\begin{eqnarray} && \varGamma_x(t, x)- b_x(x) = \varGamma(t, x) = c(t) -a^\prime(t) x, \quad   x_1 < x < x_2.  
\end{eqnarray}
and thus we see, 
\begin{eqnarray} &&  -a^\prime(t) = b^\prime(x) = B_1: constant, \quad   x_1 <  x  < x_2.  
\end{eqnarray}
and, in consequence, we have 
\begin{eqnarray} &&  u(t, x) = a(t) =A_0 -  B_1 t,  \  b(x) = B_0 + B_1 x,  \quad   x_1 <  x  < x_2.  
\end{eqnarray}
 Finally $\gamma$ above must satisfy the equation  (3.11)  and thus we have  
$c(t)=c_0$: constant.  We see in fact that
\begin{eqnarray} &&  \gamma = \sqrt{c(t) - a^\prime(t) x - b(x)} = \sqrt{c(t) - B_0}
\end{eqnarray}
satisfies that 
\begin{eqnarray} &&  \gamma_t^{} + a(t) \gamma_x^{} = \frac{c^\prime(t)}{2 \sqrt{c(t) - B_0} }= 0, 
\end{eqnarray}
that is, $ c^\prime(t) =0$. 

This gives us 
\begin{eqnarray} &&  \varGamma  =  \gamma^2 + b(x) = c_0 + B_1 x, \quad   x_1 <  x  < x_2.
\end{eqnarray}

\section{Real tunamis: tunamis {\em P} explose in approaching the coasts}

    \subsection{ }  We analyze tunamis on the coasts. Speaking like this is, however,  
problematic mathematically. ``On the coast" means the possible apparition of dry 
boundary, and this would  be critical for the existence theorem for Euler equations 
for water waves  (2.1)--(2.4);  we analyze in fact mathematically the effects of the diminution 
of the depth of water on tunamis  $P$  approaching the coasts: $\gamma^2=\varGamma-b(x)\sim 0$,
implying the transformation of ``gentle" tunamis on a deep open sea to a violent precipitations 
of rushing tunamis on the coasts.  That is  $\sqrt{\varGamma-b(x)} \gg 1$ is no more the cruising 
speed for  $P$    approaching a ``sloping beach":
 {\em  an analysis of shallow water waves of Airy on a shallow water on the sea-bed satisfying $b_x(x) > 0$  with non-vanishing $b_{xx}$ {\em (}possibly near the coasts{\em )}}.

\medskip

   We stay always in mathematics with open sea covering entirely the domain of 
analysis, but we discussed a phenomenology of real tunamis with the coast where  
$b_{xx}$ vanishing: ``sloping beach" in details. My Note in  ``2017 Annual Reports in 
ESI (Vienna)". 

\medskip

{\bf Remark}  \  We give here an example of possible ``{\em safe}" seabed: $ y = b(x)$.
Let $b(x)$  be:
\begin{eqnarray} && b(x)= -h + K(\tanh x -1), K \ and \ h \ being \  positive \  constants.
\end{eqnarray}

We see :
\begin{eqnarray*}
b(+\infty )=-h,\ b(-\infty ) = - h - 2 K, \ b(0)= -  h -  K,  \\
            b_x > 0, - \infty < x  < +\infty ; \quad b_{xx} > 0 , x < 0;  \quad b_{xx} < 0 , x > 0;   \quad  b_{xx}(0)=0.
  \end{eqnarray*}

   In fact, tunamis means originally ``attacking sea waves on sea-ports" in japanese. 
They happen thus {\em not on the deep open sea but on the shallow water}, very particular 
destructive feature of Airy's shallow water waves arriving on the coasts.  They
lose on the coast of shallow water their original cruising propagation sound speed 
so great on the deep open sea.  So, if mathematics want to show the mechanism 
of so great destructive tunamis on the coasts, they must clarify who and how he 
transforms so great kinetic energy of tunamis on the  deep open sea 
to that so great destructive rushing tunamis's energy {\em on the coasts}. 

\medskip

   We study now the tunamis  $P$ and  $Q$  satisfying
     \begin{eqnarray} 
  &&  P_t  + (\gamma+u+\frac{b_x}{P_x} )P_x = 0 \\
    && Q_t - (\gamma-u-\frac{b_x}{Q_x} )Q_x =  0,
 \end{eqnarray}
 with $P(0, x)= p(x)$ for coastal tunamis $Q$  with $Q(0,x))= 0$, on such a shallow sea as
$\gamma^2=\varGamma-b(x) \ll 1$  when it occurs that $P_x(t, X)= 0$  with $b_x(X) \neq 0$.  

\medskip

Phenomenologically speaking, the following two situations are in order: 

 \medskip
 
[1] \  Under the condition $\gamma=\sqrt{\varGamma-b(x)} \ll 1$, on $x=X$,  {\em before the crest},
we have \newline $P_x(t,X-\epsilon) > P_x(t, X)=0 $ for  $\epsilon \ll 1$ and thus $P_x(t, X-\epsilon)\to +0$
as $\epsilon \to 0$. It implies that our tunamis $P(t, x)$  get at  $x=X- 0$ an instantaneous $+\infty$
propagation speed rushing thus as inland tunamis. 

 \medskip

[2]  \ Under the condition $\gamma = \sqrt{\varGamma - b(x)} \ll 1$, on $x=X$,  {\em after the trough}, 
we have $P_x(t, X-\epsilon) < P_x(t, X)=0 $ for  $\epsilon  \ll 1$ and thus $P_x(t, X-\epsilon)\to  - 0$
as $\epsilon \to 0$. It implies that our tunamis $P(t, x)$  get at  $x=X- 0$ an instantaneous $-\infty$
propagation speed rushing consequently thus to the outer sea as offshore tunamis.

\medskip

     {\bf Remark}  These are not simple repetition of  (A)  and  (B)  in [II] of the  $\S$3, 
      as mentioned in  ($4^\circ$)  there.
 
 \medskip
 
  We begin now to show how we can "find" the above mentioned   $x = X$ {\em  before crest}
or {\em after trough}.  We know that    
   $\displaystyle P_x = u_x + \frac{\varGamma_x - b_x}{\sqrt{\varGamma  - b }} =0 $  and 
thus we see for  $x=X$,  $P_x(t, X)= 0$  implies that
$-u_x(X)\sqrt{\varGamma-b(X)}=\varGamma_x(X)-b_x(X) \sim \pm 0 $ for the shallow sea where   
$\gamma=\sqrt{\varGamma-b(x)} \ll 1$. In fact, 

 \smallskip
 
(i) near the crest where  $u_x  < 0$, we have   $\varGamma_x(X)-b_x(X)\sim +0, \varGamma_x(X) - b_x(X) > 0$;

 \smallskip

(ii) near the trough where  $u_x  > 0$, we have   $\varGamma_x(X)-b_x(X)\sim -0, \varGamma_x(X)-b_x(X) < 0$. 

%\end{document}

\begin{prop}   On the shallow water near the coasts or the shores, if our tunamis 
have  $P_x(X) = 0 $  for   $x = X$,  it would be on  X  before and near the crest or on  X
after and near the trough, on which the tangent of the seabed and that of the tunamis
waves are very close. 
\end{prop} 
 
  {\bf Remarks}
  
%  \medskip
  
(a) \  It seems that we have seen, in 2011 Tohoku tunamis, explosions of tunamis 
waves just on the shoreline, but this Proposition shows that it was not on the shoreline,
the border of the seawater and the coastland, they are rather stimulated by the sea bed
before shoreline who has the same tangent with the approaching tunamis waves. 

\medskip

 (b) \  Phenomenologically speaking and in real world, on the shore: 
 $ \varGamma_x(X)-b_x(X) \sim 0$, 
it must be  $-u_x(X)\sqrt{\varGamma-b(X)}=\varGamma_x(X)-b_x(X) =0 $.   Mathematically, however, 
it is problematic with the dry boundary to have the existence theorem for water surface for 
Euler equations. 

\medskip

Now, we examine the behavior of   $P_x(X)-P_x(X-\epsilon)$   near the crest and the trough 
above in the Proposition 4.1  when it occurs  $P_x(X)= 0$  .   As we showed before, we have

\medskip
 $\displaystyle P_x(X)-P_x(X-\epsilon)=u_x(X)-u_x(X-\epsilon) +
\frac{(\varGamma_x - b_x)(X)}{\sqrt{(\varGamma  - b)(X)}} -
\frac{(\varGamma_x - b_x)(X-\epsilon)}{ \sqrt{(\varGamma - b)(X-\epsilon)} } 
=\epsilon u_{xx}(X-\delta\epsilon)+ + \epsilon\frac{(\varGamma_x - b_x)_x(X-\delta^{\prime}\epsilon)}{\sqrt{(\varGamma  - b)(X-\delta^{\prime}\epsilon)} }
-\frac{\epsilon}{2} \left[\frac{1}{\sqrt{(\varGamma  - b)(X-\delta^{\prime}\epsilon)}} \left( \frac{(\varGamma_x - b_x)(X-\delta^{\prime}\epsilon)}{\sqrt{(\varGamma  - b)(X-\delta^{\prime}\epsilon)}} \right)^2\right]$, 
for $0<\delta, \delta^\prime<1$. 

\medskip

Then, we have

%%%%%変更
\begin{prop}    Suppose that our tunamis  $P$  satisfy $P_x(X)= 0$   on  X  in the 
Prpoposition 4.1, we have

{\em (1)}  On the crest side, where  $u_x< 0$: if our tunamis  $P$  is rather ``static", that is the 
decreasing of  $u$  slows down: $(u_x)_x= u_{xx} < 0$, then  $P$  get an instantaneous  
$+\infty$  propagation speed at  $x=  X- 0$,  rushing thus as inland tunamis. 

{\em (2)}  On the trough side, where  $u_x > 0$: If our tunamis  $P$  are rather ``dynamic", that 
is increasing of  $u$  grows up: $ (u_x)_x=u_{xx}  > 0$  and ${(\varGamma_x(x)-b_x(x))}_x > \frac{1}{2}|u_x(X)|^2$ 
for  $X-\epsilon < x < X$,  then  $P$  get at   $x = X-0$  an instantaneous $-\infty$ speed rushing to the outer sea as open sea tunamis. 
\end{prop}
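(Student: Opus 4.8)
The plan is to extract everything from the mean-value identity for $P_x(X)-P_x(X-\epsilon)$ displayed just above Proposition 4.2, and then to feed the resulting sign of $P_x(X-\epsilon)$ back into the propagation speed of the tunamis equation (3.5). On the sloping beach under consideration we have $b_x(X)>0$ and $\gamma=\sqrt{\varGamma-b}\ll1$; since $\gamma$ and the sea current $u$ stay bounded while $b_x(X)$ is a fixed nonzero number, the propagation speed $\gamma+u+b_x/P_x$ of $P$ tends to $+\infty$ precisely when $P_x(X-\epsilon)\to+0$ and to $-\infty$ precisely when $P_x(X-\epsilon)\to-0$. Hence the whole statement reduces to determining the sign of $P_x(X-\epsilon)$ for $0<\epsilon\ll1$, and then reading off the direction (inland / offshore) from $b_x(X)>0$.

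First I would rewrite the displayed identity using $P_x(X)=0$ and $(\varGamma-b)_x=\varGamma_x-b_x$, collecting the last two terms over the common factor $1/\sqrt{\varGamma-b}$:
\[
 P_x(X-\epsilon)=-\,\epsilon\,\mathcal{B}_\epsilon,\qquad
 \mathcal{B}_\epsilon=u_{xx}(X-\delta\epsilon)+\frac{1}{\sqrt{(\varGamma-b)(X-\delta'\epsilon)}}\left[(\varGamma_x-b_x)_x(X-\delta'\epsilon)-\tfrac12\,\frac{(\varGamma_x-b_x)^2(X-\delta'\epsilon)}{(\varGamma-b)(X-\delta'\epsilon)}\right],
\]
with $0<\delta,\delta'<1$, so that $\operatorname{sign}P_x(X-\epsilon)=-\operatorname{sign}\mathcal{B}_\epsilon$. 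By Proposition 4.1, at $X$ one has $(\varGamma_x-b_x)(X)=-u_x(X)\sqrt{(\varGamma-b)(X)}$, hence $\bigl((\varGamma_x-b_x)^2/(\varGamma-b)\bigr)(X)=u_x(X)^2$; by continuity this persists up to an $o(1)$ error on $(X-\epsilon,X)$, so the bracket in $\mathcal{B}_\epsilon$ is $(\varGamma_x-b_x)_x(X-\delta'\epsilon)-\tfrac12 u_x(X)^2+o(1)$, and it is multiplied by the large factor $1/\sqrt{\varGamma-b}\gg1$, which therefore governs the sign of $\mathcal{B}_\epsilon$.

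Case (1), the crest side ($u_x<0$): at a crest the surface is concave, $\varGamma_{xx}<0$, and on a gentle beach $|b_{xx}|$ is small, so $(\varGamma_x-b_x)_x<0$ near $X$; together with $-\tfrac12 u_x(X)^2<0$ the bracket is negative, hence so is $\mathcal{B}_\epsilon$, the hypothesis $u_{xx}<0$ only reinforcing this. Thus $P_x(X-\epsilon)>0$ for $0<\epsilon\ll1$ and $P_x(X-\epsilon)\to+0$; since $b_x(X)>0$, the speed $\gamma+u+b_x/P_x\to+\infty$, so $P$ acquires an instantaneous $+\infty$ propagation speed at $x=X-0$ and rushes inland. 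Case (2), the trough side ($u_x>0$): here a trough makes $\varGamma$ convex, $\varGamma_{xx}>0$, but this alone only gives $(\varGamma_x-b_x)_x>0$, which is not enough; the extra hypothesis $(\varGamma_x-b_x)_x(x)>\tfrac12|u_x(X)|^2$ on $(X-\epsilon,X)$ is exactly what forces the bracket to be $\ge\tfrac12|u_x(X)|^2-\tfrac12 u_x(X)^2+o(1)$, i.e. positive for $\epsilon$ small, and $u_{xx}>0$ again reinforces; therefore $\mathcal{B}_\epsilon>0$, $P_x(X-\epsilon)<0$ and $P_x(X-\epsilon)\to-0$, so $\gamma+u+b_x/P_x\to-\infty$ and $P$ rushes offshore to the outer sea.

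The main obstacle is the near-cancellation inside the bracket: $u_{xx}$ is of order one, whereas the bracket is amplified by the large factor $1/\sqrt{\varGamma-b}\gg1$, so one genuinely needs the uniform estimate $\bigl((\varGamma_x-b_x)^2/(\varGamma-b)\bigr)(X-\delta'\epsilon)=u_x(X)^2+o(1)$ \emph{off} the point $X$ — quantitative control of how $\varGamma_x-b_x$ and $\varGamma-b$ vary over the short interval $(X-\epsilon,X)$ — which in turn requires that $X$ be a non-degenerate zero of $P_x$ and that the ``$\sim\pm0$'' of Proposition 4.1 be given a precise meaning; in case (2) one must check in addition that the strict inequality in the hypothesis leaves enough slack to absorb this $o(1)$ (equivalently, that $(\varGamma_x-b_x)^2/(\varGamma-b)\le u_x(X)^2$ on $(X-\epsilon,X)$ or that the deficit is controlled). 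One must also confirm, within the class of shallow water waves of Airy rather than by mere geometric intuition, the signs $\varGamma_{xx}<0$ at a crest and $\varGamma_{xx}>0$ at a trough used above. Once these points are secured, the remainder is elementary sign bookkeeping together with the observation that a fixed nonzero $b_x(X)$ divided by a quantity tending to $\pm0$ blows up.
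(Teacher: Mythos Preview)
Your proof is essentially the same as the paper's: both substitute the relation $(\varGamma_x-b_x)/\sqrt{\varGamma-b}=-u_x(X)+O(\epsilon)$ (your $o(1)$, the paper's [\#1]) into the mean-value identity to obtain the simplified form (the paper's [\#2], your $\mathcal{B}_\epsilon$), and then read off the sign in the two cases. The only notable difference is in how the auxiliary sign of $(\varGamma_x-b_x)_x$ is justified: the paper argues phenomenologically that $u_{xx}<0$ (respectively $>0$) means the wave is ``amassing'' (respectively ``heaving up''), hence $(\varGamma_x-b_x)_x<0$ (respectively $>0$), whereas you invoke the geometric concavity $\varGamma_{xx}<0$ at a crest and convexity at a trough together with $|b_{xx}|$ small on a gentle beach; both are heuristic, and you are right to flag this as a point needing independent confirmation. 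Your explicit isolation of the large factor $1/\sqrt{\varGamma-b}\gg1$ as the dominant weight, and your closing paragraph on the $o(1)$ control needed off the point $X$, make transparent exactly those steps that the paper leaves implicit.
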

\begin{proof} We see first that 
 
 [\#1] At $x=X$, we have  
 $$\displaystyle \frac{\varGamma_x(X-\delta\epsilon)-b_x(X-\delta\epsilon))}{\sqrt{(\varGamma(X-\delta\epsilon)-b(X-\delta\epsilon)}}=
 -u_x(X) +O(\epsilon)$$  
 by the fact that
 $\displaystyle \frac{\varGamma_x(X)-b_x(X)}{\sqrt{(\varGamma(X)-b(X)}} -\frac{\varGamma_x(X-\delta\epsilon)-b_x(X-\delta\epsilon))}{\sqrt{(\varGamma(X-\delta\epsilon)-b(X-\delta\epsilon)}}  =O(\epsilon)$  and 
   $\displaystyle P_x = u_x + \frac{\varGamma_x - b_x}{\sqrt{\varGamma  - b }} =0 $.
   
   \medskip
   
   [\#2] And then, 
   
    $\displaystyle P_x(X)-P_x(X-\epsilon) 
=\epsilon u_{xx}(X-\delta\epsilon)+\epsilon\frac{(\varGamma_x - b_x)_x(X-\delta^{\prime}\epsilon)}{\sqrt{(\varGamma  - b)(X-\delta^{\prime}\epsilon)} } 
-\frac{\epsilon}{2} \frac{ \left|u_x(X)\right|^2}{\sqrt{(\varGamma  - b)(X-\delta^{\prime}\epsilon)}}  +O(\epsilon^2)$.
\medskip

   As we showed in the Proposition 4.1, the  point  $x = X$  on which   $P_x(X)=0$
occurs would be (i) near and before the crest or  (ii) near and after the trough.
Then we are going to, now, see exactly the conditions for the right-hand side 
of  [\#2]  above to be positive or negative.    

\medskip

(i) the crest side: $u_x  < 0$ .  If  $u_{xx}  < 0 $, then $ u_x$  is a decreasing function, that is $u_x$   
is decreasing as  the current advances. And thus, our tunamis waves are rather amassing making 
the crest justly.  We see now that the right-hand side of [\#2] above is negative, because, in fact, 
$(\varGamma_x-b_x)_x(X- \delta^{\prime}\epsilon) < 0$. Thus we have  $P_x(X-\epsilon) > P_x(X) = 0$
which implies  $P_x(X-\epsilon) \to +0, \epsilon \to +0 $   and our tunamis  $P$  get,  at $x= X -0$   an
instantaneous $+\infty $ propagation speed rushing, in consequence, as inland tunamis.

\medskip

(ii) the trough side: $u_x > 0$.  
If  $ u_{xx} > 0$, $u_x$  is an increasing function and the increment of  $u$  grows up.
That is, our tunamis  $P$  would be going to heave up: 
we see   $(\varGamma_x- b_x)_x (X- \delta^{\prime}\epsilon) > 0$  and also the fact that we have
 $((\varGamma_x- b_x)(x))_x > \frac{1}{2} |u_x(X)|^2$  for  $X-\epsilon < x < X $  shows that the right-hand side of [\#2] to be positive. Thus we have $P_x(X)-P_x(X-\epsilon)>0$  which gives
 $0 = P_x(X) > P_x(X-\epsilon)$  and  $P_x(X-\epsilon) \to - 0, \epsilon \to +0$. Our tunamis $P$ get 
 at  $x=X-0$  an instantaneous $-\infty $  speed of propagation rushing to the open sea as offshore  tunamis. 
 \end{proof}
 
% \medskip
 
  {\bf Remark 4.3}  Airy's shallow water waves would, thus, break down violently ``on a 
gentle sloping beach" rushing on shore by the sudden gain of plus infinity propagation
speed just before the crest and minus infinity propagation speed just after the trough. 
This ``possible" breakdown constitute effectively the violent rushing inland tunamis.
The mathematically exact analysis of this wave-breaking itself, however, would be out 
of the reach of this Proposition. 

\medskip 
  
   {\bf Remark 4.4}  The famous engraving of Hokusa\"i is far from the tunamis. It represents
well, nevertheless, the features of shallow water waves near the coasts. Of course, that 
is Hokusai himself who seized them well. On the tableau, the crest, advancing toward 
the coast, is already overhanging to rush forward, perhaps, violently. Or, to break down 
there... The troughs, on the other hand, are advancing toward the outer sea, as is shown 
by the bows of vessels. It would be not impossible that bottoms of some of these vessels 
are almost rubbing the shallow sea bed there.

\bigskip

%\vspace{1cm}
%\newpage 
\begin{center} 
\includegraphics[width=13truecm]{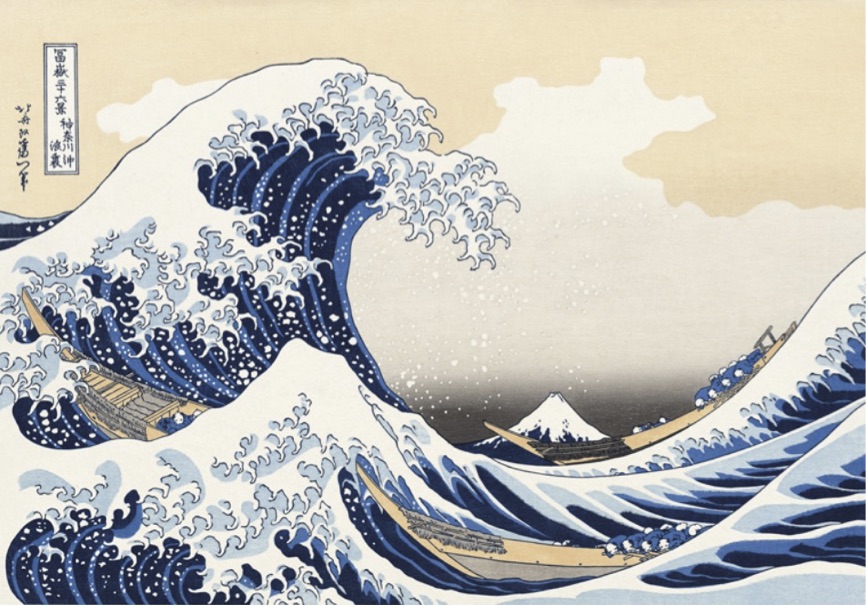}
%\caption{The engraving of Hokusai}
\end{center}
 
\medskip 

%\newpage
\subsection{ Appendix} 
% \begin{appendix}
 
  We give now an application of our Proposition 4.1  which we state again as:
  
  \begin{prop}   On the shallow water near the coasts or the shores, if our tunamis 
have  $P_x(X) = 0 $  for   $x = X$,  it would be on  X  before and near the crest or on  X
after and near the trough, on which the tangent of the seabed and that of the tunamis
waves are very close. 
\end{prop} 
 
 Tunamis have a very long wavelength, from several hundreds to a thousand kilometers 
with rather small amplitudes, and one could easily feel them  as rather gentle
long waves when  they see them from the  coasts.  In 2011, in fact, there were so many people near 
the beaches who ignore alarms from authorities to run away from tunamis disasters and   observe  tunamis arriving from the outer sea so "majestically". Not a few people were taken away by a 
so sudden arriving and attacking of violent tunamis. A cameraman, who was taking movies of tunamis 
on the supposed shoreline, confessed  that he could not say in which moment he started to 
film these sudden rushing violent tunamis. It would be rather naturel if we refer our Proposition 4.2. 
 
 Now, we propose to make a system of alarm so that  people know enough what are the exact situations 
  of tunamis arriving to coasts.

   We knew now, by our Proposition 4.1, how we can "find" the above mentioned 
   $x = X$ {\em  before crest} or {\em after trough}.  We know that  
   $\displaystyle P_x = u_x + \frac{\varGamma_x - b_x}{\sqrt{\varGamma  - b }} =0 $  and 
thus we see for  $x=X$,  $P_x(t, X)= 0$  implies that
$-u_x(X)\sqrt{\varGamma-b(X)}=\varGamma_x(X)-b_x(X) \sim \pm 0 $ for the shallow sea where   
$\gamma=\sqrt{\varGamma-b(x)} \ll 1$. 

 \smallskip
 
(i) near the crest where  $u_x  < 0$, we have   $\varGamma_x(X)-b_x(X)\sim +0, \varGamma_x(X) - b_x(X) > 0$;

 \smallskip

(ii) near the trough where  $u_x  > 0$, we have   $\varGamma_x(X)-b_x(X)\sim -0, \varGamma_x(X)-b_x(X) < 0$. 

  Now, once we know these    $x = X$ {\em  before crest} or {\em after trough}, we trace a 
perpendicular line to the coast in question passing by this point  $X$, and calculate the tangent 
of the sea bed on this line.  It is not difficult as we have precise seabed maps published by 
The Japan Coast Guard .
  Next, we fly a Drone over the tunamis waves in questions and  take photos of tunamis
waves to calculate tangents of these tunamis waves along the same  perpendicular line to the coast
by some AI.
  If these two tangents satisfy conditions: 
  $-u_x(X)\sqrt{\varGamma-b(X)}=\varGamma_x(X)-b_x(X) \sim \pm 0 $ ,
we see the situation would be dangerous.   One must   give a emergency alert to people for escaping  
from the coasts to higher places immediately by making a possible alarm device.  
%  \end{appendix}

%\begin{center} 
%\includegraphics[width=13truecm]{hokusai.jpg}
%\caption{The engraving of Hokusai}
%\end{center}

\bigskip
%\newpage 
 
 {\bf Acknowledgements} 
 The author expresses his hearty gratitudes to Taro Kakinuma and 
Tomohiro Takagawa who clarified him many aspects on tunamis. 
Kakinuma gave him these three papers shown in the beginning of 
this paper. The work of Takagawa gave him especially the confirmations 
of his aspects on tunamis, --tunamis as Airy's shallow water waves-- 
a strong support by the real analysis of size and structures of 3.11 Tohoku 
tunamis mathematically and also from the point of view of coast guard. 
A. Matsumura pointed out the incorrect ``proofs" of precedent manuscripts 
by several resumptions to arrive at this audible one. The author is much 
obliged to him.  The author is much obliged also to Y. Sakane  who has given
audible style of actual manuscript.This work was supported by the Research 
Institute for Mathematical Sciences, an International Joint Usage/Research 
Center located in Kyoto University.


\begin{thebibliography}{50}

\bibitem [1]{Frie} K. - O. Friedrichs: On the derivation of the shallow water theory, Appendix to:
``The formation of breakers and bores'' by J.J. Stoker in CPAM 1, 1 -- 87 (1948). 

\bibitem [2] {Kan.-Nis.1}  T.Kano -T. Nishida: Sur les ondes de surface de l'eau avec une justification 
     math\'ematique des \'equations des ondes en eau peu profonde, J. Math.Kyoto Univ., 
     19, 335 -- 370 (1979). 

\bibitem [3] {Kan.-Nis. 2}T.Kano -T. Nishida: Water waves and Friedrichs expansion, Lect. Note. Num.Appl. Anal., 
     Kinokuniya-North Holland, 6, 39-57 (1983).  

\bibitem [4] {Kan} T.Kano, Tunamis on a deep open sea and on a gentle sloping beach, in “Annual Report 
     2017, ESI”, Vienna (2017).

\bibitem [5] {Lan} D.Lannes, The water waves problem: mathematical analysis and asymptotics, 
    Mathematical surveys and monograph, AMS, 2013.  

\bibitem [6] {Ovs1} L.V.Ovsjannikov: A nonlinear Cauchy problem in a scale of Banach spaces (Russian),
Dokl. Akad. Nauk URSS, 200, 789-792 (1971). 

\bibitem [7] {Ovs2} L.V.Ovsjannikov: Cauchy problem in a scale of Banach spaces and its application to the 
    shallow water theory justification, Lect. Note in Math., Springer-Verlag, 503, 
    426-437 (1976).


 \end{thebibliography}
\end{document}